\theoremstyle{plain}
\newtheorem{thm}[subsubsection]{Theorem}
\newtheorem{lemma}[subsubsection]{Lemma}
\newtheorem{cor}[subsubsection]{Corollary}
\newtheorem*{mainObs1}{Basic Observation}
\theoremstyle{definition}
\newtheorem{example}[subsubsection]{Example}
\theoremstyle{remark}
\theoremstyle{definition}
\numberwithin{equation}{subsubsection}
\def\cA{\mathcal{A}}
\def\cN{\mathcal{N}}
\def\cO{\mathcal{O}}
\def\11{\mathbf{1}}
\def\CC{\mathbf{C}}
\def\QQ{\mathbf{Q}}
\def\ZZ{\mathbf{Z}}
\def\fn{\mathfrak{n}}
\def\pt{\mathrm{pt}}
\def\Spec{\mathrm{Spec}}
\newcommand{\mapright}[1]{\xrightarrow{#1}}
\newcommand{\const}[1]{\underline{#1}}
\title{Tate classes, equivariant geometry and purity}
\author{R. Virk}
\address{The Appalachians}
\begin{document}
\maketitle
\renewcommand{\thesubsection}{\arabic{subsection}}
\subsection{Introduction}
This note gives a simple explanation for the ubiquity of `vanishing of cohomology in odd degrees' in equivariant contexts. 

The heart of the matter is that actions of linear algebraic groups force the mixed Hodge structures showing up in cohomology to be of type $(n,n)$. Combined with purity, which is also often forced by the equivariant context, yields the aforementioned vanishing.
Arguments of this nature have been standard in Kazhdan-Lusztig theory for decades now (for instance, see \cite{G}, \cite{KL}, \cite{MS}, \cite{So}, \cite{SW}, \cite{Sp}; also see \cite{BJ}). The only ``new contribution"\footnote{
``It is all already in Dedekind".} that this note makes is an insistence on emphasizing the word ``Tate".

Several natural questions, related to algebraic cycles, arise from these observations. These have satisfactory answers, but treating them requires some motivic machinery. This has been relegated to a separate paper.

\textbf{Acknowledgments: }This note is mainly a digression arising from a project joint with W. Soergel and M. Wendt \cite{SVW}. In particular, the Basic Observation below was born out of explanations by W. Soergel of a more general statement for motivic sheaves.

In its current presentation, Theorem \ref{contractingthm} owes its formulation to a conversation with M. A. de Cataldo. 

Finally, this note would never have seen light of day were it not for J. Gandini and A. Maffei's insistence (and constant encouragement) that these results were not completely frivolous.

\subsection{Conventions}\label{s:conventions}
A `variety' will always mean a `separated scheme of finite type over $\mathrm{Spec}(\CC)$'. Hereon, I will write
`$\pt$' instead of `$\Spec(\CC)$'.
Constructible sheaves, cohomology, etc., will always be with $\QQ$-coefficients, and with respect to the complex analytic site associated to a variety.
I will freely use the existence of functorial mixed Hodge structures on cohomology, compactly supported cohomology, equivariant cohomology, etc. (see \cite{D} or \cite{Sa}).
A $\ZZ$-graded mixed Hodge structure $H^*$ (for instance, the cohomology $H^*(X)$ of a variety $X$) will be called \emph{pure} if each $H^i$ is a pure Hodge structure of weight $i$.
A mixed Hodge structure will be called \emph{Tate} if it is an extension of Hodge structures of type $(n,n)$ (the $n$ is allowed to vary).

\subsection{The Basic Observation}\label{s:basic}
The following Lemma is well known.
\begin{lemma}[{\cite[\S 9.1]{D}}]Let $G$ be a linear algebraic group. Then $H^*(G)$ is Tate.
\end{lemma}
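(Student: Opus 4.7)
My approach would be to make two reductions and then invoke the classical structure theory of cohomology of compact Lie groups. First, reduce to $G$ connected: the identity component $G^{\circ}$ has finite index in $G$, and the remaining components are varieties (non-canonically) isomorphic to $G^{\circ}$, so $H^{*}(G) \cong H^{*}(G^{\circ}) \otimes H^{*}(G/G^{\circ})$ with the last factor supported in degree $0$ (hence of type $(0,0)$). Second, reduce to $G$ connected reductive via the Levi decomposition $G = L \ltimes U$: in characteristic zero the unipotent radical $U$ is isomorphic as a variety to an affine space $\AA^{m}$, so $H^{*}(U) = \QQ$ and K\"unneth gives $H^{*}(G) \cong H^{*}(L)$ as mixed Hodge structures.

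For $L$ connected reductive, I would invoke the Hopf--Borel theorem: $H^{*}(L;\QQ)$ is an exterior algebra on primitive generators $x_{2d_{1}-1}, \ldots, x_{2d_{r}-1}$ which transgress, in the Leray--Serre spectral sequence of the universal bundle $L \to EL \to BL$, to algebraically independent polynomial generators of $H^{*}(BL;\QQ) = H^{*}(BT;\QQ)^{W}$, where $T \subset L$ is a maximal torus and $W$ the Weyl group. Now $H^{*}(BT) = \mathrm{Sym}(X^{*}(T) \otimes \QQ(-1))$ is visibly pure Tate, its degree-$2$ generators being Chern classes of line bundles (of type $(1,1)$); hence so is $H^{*}(BL)$. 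Since transgression is a morphism of mixed Hodge structures, each $x_{2d_{i}-1}$ is forced to be Tate of type $(d_{i}, d_{i})$, and the exterior algebra they generate is Tate.

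The main obstacle is putting the mixed Hodge structure on $H^{*}(BG)$ on a rigorous footing, since $BG$ is not a variety. The cleanest route is to use Totaro-style algebraic approximations: choose $G$-representations $V_{N}$ with open subsets $U_{N} \subset V_{N}$ on which $G$ acts freely and whose complement has codimension tending to infinity. Each quotient $U_{N}/G$ is then a smooth variety with $H^{i}(U_{N}/G) \cong H^{i}(BG)$ for $i$ in an increasing range, and one checks that the resulting mixed Hodge structure on $H^{*}(BG)$ is independent of the approximation. Verifying that the transgression in the Serre spectral sequence is compatible with these mixed Hodge structures (most naturally by replacing $EG \to BG$ by the finite-dimensional fibrations $U_{N} \to U_{N}/G$) is the only genuinely delicate step; once it is granted, the argument above proceeds with no further obstruction.
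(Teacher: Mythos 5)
Your argument is correct, and it is essentially the original argument of the cited source \cite{D}; but it takes a heavier route than the paper does. The paper's proof shares your two reductions (to $G$ connected, then to $G$ connected reductive via the Levi decomposition and the contractibility of the unipotent radical) and then simply says ``the splitting principle applies.'' Unwound, this means: for $G$ connected reductive with maximal torus $T$ contained in a Borel subgroup $B$, the Leray spectral sequence of the $T$-torsor $G \to G/T$ has $E_2$-page $H^*(G/T)\otimes H^*(T)$ (the local system is trivial because $G/T$ is simply connected); now $H^*(G/T)\cong H^*(G/B)$ is Tate by the Bruhat cell decomposition (equivalently, it is generated by Chern classes of line bundles), $H^*(T)=H^*((\CC^{\times})^r)$ is visibly Tate, and the class of Tate mixed Hodge structures is stable under tensor product, subquotient and extension, so $H^*(G)$ is Tate. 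This keeps the entire argument inside honest varieties and thereby sidesteps the one genuinely delicate point in your write-up, namely equipping $H^*(BG)$ with a mixed Hodge structure and verifying that the transgression in the universal fibration respects it (your proposed fix via finite-dimensional approximations $U_N/G$ does work, and is in effect what \cite{D} does simplicially, but it is real extra infrastructure). What your route buys in exchange is finer information: it identifies the primitive generators as classes of honest type $(d_i,d_i)$ rather than merely exhibiting $H^*(G)$ as an iterated extension of Tate twists. One last remark on logical order: the Tate-ness of $H^*(BG)$, which you establish by hand via $H^*(BT)^W$ and Chern classes, is in this paper the case $X=\pt$ of the Basic Observation, which is itself deduced from the present Lemma; your direct derivation avoids any circularity, but it is worth noticing that the paper runs the implication in the opposite direction.
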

\begin{proof}
We may assume $G$ is connected reductive (Levi decomposition). Then the splitting principle applies.
\end{proof}
The following is essentially contained in \cite[\S 9]{D} (in \cite{D} the observation is only made explicit for $X=\pt$; regardless, it is in there).
\begin{mainObs1}Let $G$ be a linear algebraic group acting on a variety $X$. If $H^*(X)$ is Tate, then the $G$-equivariant cohomology $H^*_G(X)$ is Tate.
\end{mainObs1}
\begin{proof}Consider the usual simplicial variety $[X/G]_{\bullet}$ (see \cite[\S6.1]{D}) computing $H^*_G(X)$.
Filtering $[X/G]_{\bullet}$ by skeleta yields a spectral sequence converging to $H^*_G(X)$ \cite[Proposition 8.3.5]{D}. The $E_1$ entries of this spectral sequence are of the form $H^q(G^{\times p}\times X)$. 
\end{proof}
The following consequence is essentially contained in \cite[Theorem 1(a)]{BP} (the argument in \cite{BP} is quite different though, and as formulated, \cite[Theorem 1(a)]{BP} is a statement about $E$-polynomials).
\begin{cor}
Let $G$ be a linear algebraic group and $K\subset G$ a closed subgroup. Then $H^*(G/K)$ is Tate.
\end{cor}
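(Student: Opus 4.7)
The plan is to obtain this as an immediate application of the Basic Observation, with $K$ playing the role of the ambient group acting on $G$.

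First I would note that $K$ is itself a linear algebraic group, being a closed subgroup of the linear algebraic group $G$. Consider the action of $K$ on $G$ by right multiplication (or left, it does not matter). By the Lemma at the start of \S\ref{s:basic}, $H^*(G)$ is Tate. The Basic Observation then applies and gives that $H^*_K(G)$ is Tate.

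The final step is to identify $H^*_K(G)$ with $H^*(G/K)$. Since $K$ acts freely on $G$ by right multiplication, the simplicial variety $[G/K]_\bullet$ computing $H^*_K(G)$ is equivalent to the constant simplicial variety with value $G/K$, so the two cohomologies agree. Hence $H^*(G/K)$ is Tate.

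The only subtle point, which I do not expect to be a genuine obstacle, is the identification $H^*_K(G)\cong H^*(G/K)$ for the free action by right multiplication; this is a standard fact about equivariant cohomology of free actions and falls out immediately from the simplicial description used in the proof of the Basic Observation. No new ingredients beyond the Lemma and the Basic Observation are required.
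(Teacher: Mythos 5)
Your proof is correct and is clearly the argument the paper intends (the corollary is stated there without proof as an immediate consequence of the Basic Observation): take $K$ acting freely on $G$, invoke the Lemma for Tate-ness of $H^*(G)$, apply the Basic Observation, and identify $H^*_K(G)$ with $H^*(G/K)$ via the freeness of the action. The identification for free actions is indeed standard and poses no obstacle.
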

%
%
\begin{cor}\label{tatecor}
Let $X$ be a variety on which a linear algebraic group $G$ acts with finitely many orbits. Then the compactly supported cohomology $H^*_c(X)$ is Tate.
\end{cor}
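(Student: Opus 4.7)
The plan is to induct on the number of $G$-orbits in $X$. For the base case $X$ is a single orbit, hence of the form $G/K$ for some closed subgroup $K \subset G$. By the preceding Corollary, $H^*(G/K)$ is Tate. Since an orbit is smooth, Poincar\'e duality gives a Hodge-structure isomorphism $H^i_c(G/K) \cong H^{2d-i}(G/K)^\vee(-d)$, where $d = \dim(G/K)$; because Tate Hodge structures are preserved under duals and Tate twists, $H^*_c(G/K)$ is Tate.

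For the inductive step, pick a closed $G$-orbit $Z \subset X$ (which exists since there are only finitely many orbits), and let $U = X \setminus Z$, which is a $G$-stable open subvariety with strictly fewer orbits. The standard long exact sequence of a closed/open decomposition,
\begin{equation*}
\cdots \to H^i_c(U) \to H^i_c(X) \to H^i_c(Z) \to H^{i+1}_c(U) \to \cdots,
\end{equation*}
is a sequence of mixed Hodge structures. By the inductive hypothesis $H^*_c(U)$ is Tate, and by the base case $H^*_c(Z)$ is Tate.

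To conclude that $H^*_c(X)$ is Tate, I would invoke the fact that the full subcategory of Tate mixed Hodge structures is closed under extensions (and subquotients) inside the abelian category of mixed Hodge structures: Tate structures are by definition iterated extensions of objects of type $(n,n)$, and weight considerations show this subcategory is Serre. Applying this to the short exact sequences extracted from the long exact sequence above yields the result.

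The only step that requires a small amount of care is the base case, specifically ensuring that one can pass from $H^*$ being Tate to $H^*_c$ being Tate on a smooth variety; this is immediate from Poincar\'e duality in the category of mixed Hodge structures. The rest of the argument is a completely formal d\'evissage, so I do not anticipate any real obstacle.
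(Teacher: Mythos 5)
Your proof is correct and is exactly the standard d\'evissage the paper intends (the Corollary is stated without proof there): stratify by orbits, handle a single orbit $G/K$ via the preceding Corollary together with Poincar\'e duality on the smooth homogeneous space, and climb the long exact sequence of compactly supported cohomology using that Tate mixed Hodge structures form a Serre subcategory. All the ingredients you invoke (existence of a closed orbit, compatibility of the long exact sequence with mixed Hodge structures, stability of Tate structures under duals, twists, extensions and subquotients) are standard and correctly applied.
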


\subsection{Intersection cohomology}
Write $IH^*(X)$ for the intersection cohomology of $X$, normalized so that if $X$ is smooth and equidimensional, then $IH^k(X) = H^k(X)$. 
\begin{thm}\label{oddvanish}
Let $X$ be a complete variety endowed with the action of a linear algebraic group $G$. If $X$ admits a $G$-equivariant resolution of singularities $E \to X$ such that $E$ admits finitely many orbits, then $IH^*(X)$ vanishes in odd degrees.
\end{thm}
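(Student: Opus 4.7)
The plan is to reduce everything to the smooth, complete space $E$, where the Basic Observation and weight-purity together force vanishing in odd degrees, and then to transport this vanishing back to $X$ via the decomposition theorem.

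First I would observe that $\pi\colon E \to X$ is proper (as a resolution) and $X$ is complete, so $E$ is complete as well; hence $H^*(E) = H^*_c(E)$. The hypothesis that $G$ acts on $E$ with finitely many orbits then places us squarely in the setting of Corollary~\ref{tatecor}, yielding that $H^*(E)$ is Tate. Since $E$ is moreover smooth and complete, $H^k(E)$ is pure of weight $k$. A pure Hodge structure of weight $k$ that is simultaneously Tate must be a direct sum of copies of $\QQ(-n)$ with $2n = k$; in particular it vanishes whenever $k$ is odd. Thus $H^k(E) = 0$ for all odd $k$.

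To transfer this vanishing to $X$, I would invoke the Beilinson--Bernstein--Deligne decomposition theorem applied to the proper morphism $\pi$. Since $E$ is smooth and $\pi$ is birational, $\IC_X$ appears as a direct summand of $R\pi_* \QQ_E[\dim E]$, with the remaining summands supported on proper subvarieties of $X$. Taking hypercohomology identifies $IH^k(X)$ as a $\QQ$-vector space summand of $H^k(E)$, and the desired conclusion follows at once. The only substantive input beyond the ``Tate'' observations of the previous subsections is the appeal to the decomposition theorem; this is really the only non-formal ingredient of the argument, and hence the main potential obstacle.
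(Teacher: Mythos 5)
Your proposal is correct and follows essentially the same route as the paper: apply Corollary~\ref{tatecor} to get that $H^*(E)$ is Tate, use smoothness and completeness of $E$ for purity, and invoke the decomposition theorem to realize $IH^*(X)$ as a direct summand of $H^*(E)$. The paper's proof is just a two-line compression of exactly this argument, leaving implicit the observation you spell out that a weight-$k$ pure Tate structure vanishes for $k$ odd.
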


\begin{proof}
By the Decomposition Theorem, $IH^*(X)$ is a direct summand of $H^*(E)$. The latter is pure and Tate (Corollary \ref{tatecor}).
\end{proof}

\subsection{Springer's Homotopy Lemma}\label{obs2}
It will now be convenient to use the language of mixed Hodge modules \cite{Sa}. 
One can avoid this and only use classical (as in the style of \cite{D}) mixed Hodge theory, but this would make the language cumbersome.

Functors on mixed Hodge modules will tacitly be derived. Write $\const{\pt}$ for the trivial (weight $0$) rank one pure Hodge structure on $\pt$. Let $X$ be a variety, and $a\colon X \to \pt$ the structure map. Set 
$\const{X} = a^*\const{\pt}$.

Let $S$ be a variety endowed with a $\CC^{\times}$-action that contracts $S$ to some point $i\colon \{x\} \hookrightarrow S$. Let $a\colon S \to \{x\}$ be the evident map. Call a complex $\cA$, of mixed Hodge modules on $S$, \emph{na\"ively equivariant} if there exists an isomorphism $\alpha^*\cA \simeq p^*\cA$, where $\alpha, p \colon \CC^{\times} \times S \to S$ are the action and projection maps respectively.
\begin{lemma}[Springer's Homotopy Lemma {\cite[Proposition 1]{Sp}}]
If $\cA$ is na\"ively equivariant on $S$, then the canonical maps $a_*\cA \mapright{\sim} i^*\cA$ and $i^!\cA\mapright{\sim}a_!\cA$ are isomorphisms.
\end{lemma}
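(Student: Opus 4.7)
The plan is to promote the contracting $\CC^\times$-action to an extended morphism $\tilde\alpha\colon\CC\times S\to S$ and read the naive equivariance of $\cA$ as a homotopy-invariance statement for the projection $q\colon\CC\times S\to\CC$. Write $p\colon\CC\times S\to S$ for projection to $S$, $s_t\colon S\hookrightarrow\CC\times S$ for the slice at $t\in\CC$, $j\colon\CC^\times\times S\hookrightarrow\CC\times S$ for the open inclusion, and $i_0,i_1\colon\{0\},\{1\}\hookrightarrow\CC$. The contraction hypothesis furnishes $\tilde\alpha$, with $\tilde\alpha\circ s_0=i\circ a$ and $\tilde\alpha\circ s_1=\id_S$, so $s_1^*\tilde\alpha^*\cA\simeq\cA$ and $s_0^*\tilde\alpha^*\cA\simeq a^*i^*\cA$; the naive equivariance of $\cA$ translates into $j^*\tilde\alpha^*\cA\simeq j^*p^*\cA$.

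The central object is $\cB:=q_*\tilde\alpha^*\cA$ on $\CC$. Open base change combined with the Kunneth formula for the trivial fibration $\CC^\times\times S\to\CC^\times$ shows that $\cB|_{\CC^\times}$ is the constant complex with value $a_*\cA$, so in particular its stalk at $1$ is $a_*\cA$. At the origin, the base change morphism combined with the projection formula reads $i_0^*\cB\to a_*(s_0^*\tilde\alpha^*\cA)=a_*(a^*i^*\cA)\simeq(a_*\const{S})\otimes i^*\cA$. Since the real positive flow of the $\CC^\times$-contraction exhibits $S$ as topologically contractible, $a_*\const{S}\simeq\const{\pt}$ and the target collapses to $i^*\cA$.

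The main obstacle is to show that $\cB$ is genuinely constant across all of $\CC$, so that the two stalk identifications above combine to force $a_*\cA\simeq i^*\cA$. The key additional ingredient is that $\CC^\times$ acts on $\CC\times S$ by $t\cdot(s,y)=(ts,y)$, making both $\tilde\alpha$ and $q$ equivariant (scaling on $\CC$, original action on $S$). Smooth base change along the action and projection maps $\CC^\times\times\CC\to\CC$ (both of which are submersions) then forces $\cB$ to inherit a naive $\CC^\times$-equivariance on $\CC$ for the scaling action; combined with the constancy of $\cB|_{\CC^\times}$ and the base change identification at $0$, this rigidifies $\cB$ to the constant complex on $\CC$ of value $a_*\cA$, whence $a_*\cA\simeq i^*\cA$. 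A short adjunction chase confirms this is the canonical map. The second isomorphism $i^!\cA\xrightarrow{\sim}a_!\cA$ follows by applying Verdier duality to the first, noting that naive equivariance is preserved under $\mathbb{D}$ because the action and projection maps are smooth of the same relative dimension.
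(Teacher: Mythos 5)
The paper itself offers no proof of this lemma: it simply records the statement and cites Springer \cite[Proposition 1]{Sp}. So your argument has to be judged against the standard proof. Your scaffolding is the right one --- extend the action to $\tilde\alpha\colon\CC\times S\to S$, form $\cB=q_*\tilde\alpha^*\cA$ on $\CC$, identify $\cB|_{\CC^\times}$ with the constant complex on $a_*\cA$ by open and smooth base change, and try to identify the fibre over $0$ with $i^*\cA$. The reduction of the second isomorphism to the first by Verdier duality is also fine, since $\alpha$ and $p$ are smooth of the same relative dimension, so na\"ive equivariance is indeed self-dual.

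The gap is precisely the step you flag as the main obstacle: crossing from $\CC^\times$ to $0$. Two assertions there are unjustified, and together they carry the entire content of the lemma. First, the ``base change identification at $0$'': the morphism $i_0^*q_*\tilde\alpha^*\cA\to a_*s_0^*\tilde\alpha^*\cA$ is a non-proper base change map ($q$ is proper only when $S$ is complete), and such maps are not isomorphisms in general; nothing in your argument shows this one is. Second, the claim that na\"ive $\CC^\times$-equivariance of $\cB$ for the scaling action, combined with constancy on $\CC^\times$, ``rigidifies'' $\cB$ to a constant complex is false as a general principle: the complex $j_!\const{\CC^{\times}}$ on $\CC$ (with $j$ the open inclusion) is na\"ively equivariant for scaling and restricts to the constant sheaf on $\CC^\times$, yet its stalk at $0$ vanishes. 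Worse, the assertion that a na\"ively equivariant complex on $\CC$, contracted to $0$ by scaling, satisfies $R\Gamma(\CC,\cB)\simeq i_0^*\cB$ is itself an instance of the lemma being proved (for $S=\mathbb{A}^1$), so appealing to such a rigidification without proof is circular. To close the gap you need the mechanism that actually controls the degeneration at $t=0$ --- for instance Springer's analysis via the specialization (nearby cycles) triangle at the origin, or the attractor-correspondence arguments of Braden and of Drinfeld--Gaitsgory. As written, the proof assumes the crux rather than establishing it.
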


\subsection{Contracting slices}\label{s:contracting}Let $G$ be a linear algebraic group acting on a variety $X$. A \emph{contracting slice} at a point $x\in X$ is the data of a locally closed subvariety $S\subset X$ containing $x$, and satisfying:
\begin{enumerate} 
\item the map $G\times S \to X$, $(g,x)\mapsto gx$ is smooth;
\item there exists a one parameter subgroup $\CC^{\times}\to G$ that leaves $S$ stable and contracts $S$ to $x$.
\end{enumerate}
We will say that the $G$-action on $X$ \emph{admits contracting slices} if each $G$-orbit contains a point that admits a contracting slice.
The following result is contained either implicitly or explicitly (sometimes in special cases) or in slightly different language (for instance, stalkwise/pointwise purity of pure Hodge modules vs. purity of fibres) in \cite[\S5.2]{BeBe}, \cite{BJ}, {\cite[\S14]{BL}}, \cite{dCMM}, \cite{G}, \cite{KL}, \cite{MS}, \cite{So}, \cite{SW}, \cite{Sp}. Undoubtedly, this is an incomplete list: the use of contracting slices pervades representation theory.
\begin{thm}
\label{contractingthm}Let $G$ be a linear algebraic group acting on $E$ and $X$. Assume $E$ is rationally smooth, and admits finitely many orbits. Let $\pi\colon E\to X$ be a $G$-equivariant proper morphism. If $X$ admits contracting slices, then the cohomology of each fibre $H^*(\pi^{-1}(x))$, $x\in X$, is pure and Tate. In particular, $H^*(\pi^{-1}(x))$ vanishes in odd degrees.
\end{thm}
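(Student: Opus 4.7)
The plan is to establish purity and Tate-ness of $H^*(\pi^{-1}(x))$ separately; odd vanishing then follows at once, since the Tate Hodge structure $\QQ(-n)$ sits in even weight $2n$ and a pure Hodge structure of odd weight leaves no room for a Tate summand. By $G$-equivariance of $\pi$, the isomorphism class of $\pi^{-1}(x)$ is unchanged as $x$ ranges over its $G$-orbit, so I may replace $x$ by a point admitting a contracting slice $S$. The complex $\pi_*\const{E}|_S$ is na\"ively $\CC^\times$-equivariant (being the restriction of a $G$-equivariant complex), so Springer's Homotopy Lemma provides the key identification $H^*(\pi^{-1}(S))\cong H^*(\pi^{-1}(x))$.

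For Tate-ness, the crucial observation is that the stabilizer $G_x$ is a linear algebraic group acting on $\pi^{-1}(x)$ with finitely many orbits. A short calculation suffices: if $O\subset E$ is a $G$-orbit meeting $\pi^{-1}(x)$ at some $e$, then $ge\in\pi^{-1}(x)$ forces $g\cdot x=x$, so $O\cap\pi^{-1}(x)=G_x\cdot e$ is a single $G_x$-orbit. Hence the $G_x$-orbits on $\pi^{-1}(x)$ are indexed by the (finitely many) $G$-orbits of $E$ that meet $\pi^{-1}(x)$. Corollary~\ref{tatecor} then makes $H^*_c(\pi^{-1}(x))$ Tate, and completeness of $\pi^{-1}(x)$ (from properness of $\pi$) promotes this to $H^*(\pi^{-1}(x))$.

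For purity, I would bracket the weights of $H^j(\pi^{-1}(x))$ from both sides. The upper bound $\leq j$ is Deligne's standard weight inequality for complete varieties. For the matching lower bound I pass to $\pi^{-1}(S)$. Condition (i) of the contracting slice, via base change along $\pi$, produces a smooth morphism $G\times\pi^{-1}(S)\to E$; since rational smoothness is preserved under smooth pullback and is detected on the second factor when $G$ itself is smooth, rational smoothness of $E$ transfers to $\pi^{-1}(S)$. Poincar\'e duality on a rationally smooth variety, combined with Deligne's universal bound $H^j_c\leq j$, then produces $H^j(\pi^{-1}(S))$ of weight $\geq j$. Transferring along the Springer identification yields the matching lower bound on $H^j(\pi^{-1}(x))$, so the fibre cohomology is pure of weight $j$.

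The main technical point I anticipate wrestling with is the transfer of rational smoothness across the smooth map $G\times\pi^{-1}(S)\to E$, which is standard but must be invoked via the right local characterization (rational smoothness as the equality $\IC(\cdot)=\const{\cdot}[\dim\cdot]$ of perverse/Hodge sheaves, stable under smooth pullback and descending along smooth projections with smooth total space). Everything else is a bookkeeping exercise combining the Basic Observation, Springer's Homotopy Lemma, and the Deligne--Saito weight formalism.
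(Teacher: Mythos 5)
Your proposal is correct, and its skeleton coincides with the paper's: reduce to a contraction point in each orbit, apply Springer's Homotopy Lemma to $(\pi_*\const{E})|_S$ to identify the fibre cohomology with something computed on the slice, and get the Tate property from the observation that $G_x$ acts on $\pi^{-1}(x)$ with finitely many orbits together with Corollary \ref{tatecor} and completeness of the fibre (your verification that each $G$-orbit of $E$ meets the fibre in a single $G_x$-orbit is exactly the point the paper leaves implicit). Where you diverge is the purity step. The paper quotes \cite[\S2.3.2]{MS} for the purity of $(\pi_*\const{E})|_S$ as a complex of Hodge modules, and then concludes via the weight monotonicity of $a_*$ versus $i^*$ in Springer's lemma. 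You instead bracket the weights of the global object: the upper bound $\leq j$ from completeness of $\pi^{-1}(x)$, and the lower bound $\geq j$ by transporting rational smoothness from $E$ to $\pi^{-1}(S)$ along the smooth map $G\times\pi^{-1}(S)\to E$ (obtained by base change from condition (i) of the slice) and invoking Poincar\'e duality there. This is a legitimate and more self-contained substitute for the citation --- it is essentially the ``global sections'' shadow of the pointwise-purity argument in \cite{MS} --- at the cost of having to verify the descent of rational smoothness along $G\times\pi^{-1}(S)\to\pi^{-1}(S)$, which you correctly flag and which works by K\"unneth for $\IC$ since $G$ is smooth. The only loose end worth recording is equidimensionality of $\pi^{-1}(S)$ (or arguing component by component) so that Poincar\'e duality applies in the stated form; this is a minor point that does not affect the validity of the argument.
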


\begin{proof}
The purity assertion is a special case of the well known fact that contracting slices guarantee pointwise purity (i.e., purity of stalks and costalks at all points) of every pure $G$-equivariant Hodge module on $X$. In slightly more detail, it suffices to prove the result for a single point in each $G$-orbit in $X$. The restriction of $\pi_*\const{E}$ to a contracting slice is pure \cite[\S2.3.2]{MS}. Thus, Springer's Homotopy Lemma yields purity at each contraction point. As there are only finitely many $G$-orbits in $E$, the isotropy group $G_x$ acts with finitely many orbits on the fibre $\pi^{-1}(x)$. So Corollary \ref{tatecor} applies.
\end{proof}
%
%
%
\subsection{Examples}\label{s:examples}
The above story now applies to flag varieties, toric varieties, symmetric varieties, wonderful compactifications, ..., where there are well known group actions and/or contracting slices.

\begin{example}Let $X$ be a complete rationally smooth variety on which a linear algebraic group $G$ acts with finitely many orbits (for instance a complete simplicial toric variety). Then $H^*(X)$ is pure (rational smoothness plus completeness) and Tate (Corollary \ref{tatecor}). In particular, $H^*(X)$ vanishes in odd degrees.
\end{example}

\begin{example}Let $G$ be a connected reductive group. A $G$-variety is called \emph{spherical} if it contains a dense orbit for a Borel subgroup of $G$.
Complete spherical varieties are known to satisfy the assumptions of Theorem \ref{oddvanish}. In particular, if $X$ is spherical, then $IH^*(X)$ vanishes in odd degrees \cite{BJ}. Note that toric varieties are spherical.
\end{example}

\begin{example}Let $B\subset G$ be a Borel subgroup, and $\pi\colon E \to G/B$ a $B$-equivariant proper morphism to the flag variety $G/B$. Assume $E$ is rationally smooth, and admits finitely many orbits. Let $x\in G/B$. Then, for a suitable product $U$ of root subgroups of $G$, the map $u \mapsto ux$ defines an embedding $U\hookrightarrow G/B$ whose image is a cell transversal to the $B$-orbit of $x$. This cell is contracted by a one parameter subgroup (of $B$) to $x$. Consequently, Theorem \ref{contractingthm} applies, and $H^*(\pi^{-1}(x))$ vanishes in odd degrees for all $x\in G/B$. In fact, $H^*(\pi^{-1}(x))$ is generated by algebraic cycles (this follows from purity combined with \cite[Theorem 3]{To}). This generalizes the fact that fibres of Bott-Samelson resolutions can be paved by affine spaces.
\end{example}

\begin{example}The same result as in the previous example holds if we replace $G/B$ by $G/K$, where $K\subset G$ is a symmetric subgroup. Contracting slices are known to exist for the $B$-action on $G/K$ \cite{MS}.
\end{example}

\begin{example}Analogously, Theorem \ref{contractingthm} applies to spherical varieties that admit contracting slices for the $G$-action. Not all spherical varieties admit contracting slices. Regardless, it can be shown (use the argument in the proof of \cite[Theorem 4]{BJ}) that on a \emph{normal} spherical variety, every pure $G$-equivariant mixed Hodge module is pointwise pure (i.e., its stalks and costalks are pure at every point). This immediately yields the conclusions of Theorem \ref{contractingthm}.
\end{example}

\begin{example}Let $G$ be connected semisimple, and let $\cN$ be the cone of nilpotent elements in $Lie(G)$. Then the adjoint action of $G$ on $\cN$ admits contracting slices (for instance, see \cite[\S3.7.14]{CG}). Hence, Theorem \ref{contractingthm} applies. Unhappily, this doesn't yield the vanishing of the cohomology of Springer fibres in odd degrees, since $G$ doesn't act on the Springer resolution with finitely many orbits.
\end{example}

\subsection{Complements}
\begin{enumerate}
\item The Basic Observation is a specific instance of the more general observation that if $X_{\bullet}$ is a simplicial variety with each $H^*(X_i)$ Tate, then $H^*(X_{\bullet})$ is Tate. This yields statements, analogous to the Basic Observation, for algebraic stacks with atlases.

I don't know any examples (apart from $[X/G]$) where this yields anything interesting that is not already well known using simpler methods. However, see \cite{Sh}.
\item The Basic Observation has a weak converse: if $H^*(X)$ is pure, and $H^*_G(X)$ is Tate, then $H^*(X)$ is Tate. This is immediate, since purity implies $H^*_G(X)\simeq H^*_G(\pt) \otimes H^*(X)$ as an $H^*_G(\pt)$-module.
I don't know of a counterexample to this statement with the purity assumption dropped.
\item Springer's Homotopy Lemma uses $\CC^{\times}$-actions to infer purity. These can also be exploited to deduce the Tate property:

Let $X$ be a variety endowed with a $\CC^{\times}$-action. Assume $H^*(X)$ is pure. If the cohomology of the fixed point subvariety $H^*(X^{\CC^{\times}})$ is Tate, then so is $H^*(X)$. 

This assertion should be viewed as a cohomological counterpart to the classical Bialynicki-Birula decomposition \cite{BB}.
To prove it, note that the Localization Theorem (in equivariant cohomology) yields that restriction $H^*_{\CC^{\times}}(X) \to H^*_{\CC^{\times}}(X^{\CC^{\times}})$ is an isomorphism modulo $H^*_{\CC^{\times}}(\pt)$-torsion. Purity of $H^*(X)$ implies:
\[ H^*_{\CC^{\times}}(X) \simeq H^*_{\CC^{\times}}(\pt) \otimes H^*(X) \]
as an $H^*_{\CC^{\times}}(\pt)$-module. In particular, $H^*_{\CC^{\times}}(X)$ is free. Consequently, the restriction $H^*_{\CC^{\times}}(X) \hookrightarrow H^*_{\CC^{\times}}(X^{\CC^{\times}})$ is an injection. Both $H^*_{\CC^{\times}}(X^{\CC^{\times}})$ and $H^*_{\CC^{\times}}(\pt)$ are Tate. Therefore, $H^*(X)$ must also be Tate.
\item Although I have not checked the details, everything in this note should extend readily to the context of P. Deligne's Weil conjecture machinery and Frobenius actions on $\ell$-adic cohomology.
\end{enumerate}

\end{document}